\documentclass[a4paper,12pt]{article}

\usepackage{amsmath,amssymb,amsthm}
\usepackage{latexsym}
\usepackage{graphics}
\usepackage{graphicx}
\usepackage{psfrag}


\theoremstyle{plain}
\newtheorem{Thm}{Theorem}[section]
\newtheorem{Lem}[Thm]{Lemma}
\newtheorem{Prop}[Thm]{Proposition}

\theoremstyle{definition}
\newtheorem*{Defi}{Definition}
\newtheorem{Rem}[Thm]{Remark}
\newtheorem{Ex}[Thm]{Example}


\title{The competition number of a generalized line graph is at most two}

\author{
\begin{tabular}{c}
{\sc Boram PARK}
\thanks{Corresponding author.}
\thanks{This work was supported by National Research Foundation of Korea Grant 
funded by the Korean Government, 
the Ministry of Education, Science and Technology. (NRF-2011-357-C00004).}\\
[1ex]
DIMACS, Rutgers University \\
Piscataway, NJ 08854, United States \\
{\tt boramp@dimacs.rutgers.edu}\\
\\
{\sc Yoshio SANO}
\thanks{The author was supported
by JSPS Research Fellowships for Young Scientists.}\\
[1ex]
National Institute of Informatics \\
Tokyo 101-8430, Japan \\
{\tt sano@nii.ac.jp}
\end{tabular}
}

\date{}


\begin{document}

\maketitle

\begin{abstract}
In 1982, Opsut showed that the competition number of a line graph
is at most two and gave a necessary and sufficient condition
for the competition number of a line graph being one.
In this note, we generalize this result to
the competition numbers of generalized line graphs,
that is, we show that the competition number of a generalized line graph
is at most two, and give necessary conditions and sufficient conditions
for the competition number of a generalized line graph being one.
\end{abstract}


\noindent
{\bf Keywords:}
competition graph;
competition number;
generalized line graph

\noindent
{\bf 2010 Mathematics Subject Classification:}
05C20, 05C76

\section{Introduction}

The notion of a competition graph was introduced by Cohen \cite{Cohen1}
as a means of determining the smallest dimension of ecological phase space.
The \emph{competition graph} $C(D)$ of a digraph $D$ is a graph
which has the same vertex set
as $D$ and an edge between two distinct vertices $u$ and $v$ if and only if
there exists a vertex $x$ in $D$ such that $(u,x)$ and $(v,x)$ are arcs of $D$.
Roberts \cite{MR0504054} observed that any graph $G$ together
with sufficiently many isolated vertices is the competition graph of
an acyclic digraph. The \emph{competition number} $k(G)$ of
a graph $G$ is defined to be the smallest nonnegative integer $k$
such that $G$ together with $k$ isolated vertices added
is the competition graph of an acyclic digraph.
It is not easy in general to compute $k(G)$
for an arbitrary graph $G$,
since Opsut \cite{MR679638} showed that the computation of the
competition number of a graph is an NP-hard problem.
It has been one of the important research problems in the study of
competition graphs to compute the exact values of
the competition numbers of various graphs.
For some special graph families, we have explicit formulae
for computing competition numbers:
If $G$ is a chordal graph without isolated vertices,
then $k(G)=1$; If $G$ is a nontrivial triangle-free connected graph
then $k(G)=|E(G)|-|V(G)|+2$ (\cite{MR0504054}).
For more recent results on graphs
whose competition numbers are calculated exactly,
see \cite{K, KS, KPS, KPS2012, KR, PKS2, PS1, PS2, S}.

The \emph{line graph} $L(H)$ of a graph $H$ is
the simple graph defined by $V(L(H))=E(H)$ and
$ee'\in E(L(H))$ if and only if $e$ and $e'$ have a vertex in common
and $e\neq e'$. 
A graph $G$ is called a \emph{line graph}
if there exists a graph $H$
such that $G$ is isomorphic to the line graph of $H$.
A \emph{clique} $S$ of a graph $G$ is a set of vertices of $G$
such that the subgraph induced by $S$ is a complete graph
(the empty set is also considered a clique).
A vertex $v$ in a graph $G$ is called \emph{simplicial} if
the neighborhood of $v$ in $G$ is a clique of $G$.
In 1982, Opsut \cite{MR679638} showed the following theorem.

\begin{Thm}[\cite{MR679638}]\label{thm:Opsut}
For a line graph $G$, $k(G)\le 2$ and the equality holds if and only if
$G$ has no simplicial vertex.
\end{Thm}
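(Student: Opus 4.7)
The plan is to prove the two parts of the theorem separately: first the upper bound $k(L(H)) \le 2$, then the characterization of equality via the existence of a simplicial vertex.

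The key structural tool is the edge clique cover of $G = L(H)$ by \emph{star cliques}. For each vertex $v \in V(H)$ with $\deg_H(v) \ge 2$, the set $C_v := \{e \in E(H) : v \in e\}$ is a clique of $G$, and the family $\{C_v : \deg_H(v) \ge 2\}$ covers every edge of $G$, since two edges of $H$ are adjacent in $L(H)$ precisely when they share an endpoint. For the upper bound I will fix an ordering $v_1, \ldots, v_n$ of $V(H)$ (for instance, a BFS or DFS order on each component), introduce two new vertices $a_1, a_2$, and build an acyclic digraph $D$ on $V(G) \cup \{a_1, a_2\}$ by assigning to each star clique $C_{v_i}$ a distinct ``prey'' $p(v_i) \in (E(H) \setminus C_{v_i}) \cup \{a_1, a_2\}$ and adding arcs $e \to p(v_i)$ for every $e \in C_{v_i}$. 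The requirement $p(v_i) \notin C_{v_i}$ guarantees no spurious edges in $C(D)$, while choosing $p(v_i)$ later than all of $C_{v_i}$ in a suitable topological order keeps $D$ acyclic. The two added vertices $a_1, a_2$ are used exactly for the last two cliques in the ordering, where no edge of $H$ remains as a candidate; choosing the ordering of $V(H)$ so that this suffices is the main technical obstacle.

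For one direction of the equality characterization, assume $G = L(H)$ has no simplicial vertex; I will deduce $k(G) \ge 2$, which together with the upper bound forces $k(G) = 2$. Argue by contrapositive: suppose $k(G) \le 1$, so that $G \cup \{z\}$ is the competition graph of some acyclic digraph $D$. The subdigraph $D - z$ is also acyclic, hence has a sink $y$, and every out-neighbor of $y$ in $D$ must lie in $\{z\}$ or else be absent. In either sub-case the competition neighbors of $y$ in $C(D)$ form a clique: either the set is empty (if $y$ has no out-neighbor), or it consists of the other in-neighbors of $z$ in $D$, which share the common prey $z$ and are hence pairwise adjacent in $C(D)$. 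Thus $y$ is simplicial in $C(D)$; since $z$ is isolated in $C(D) = G \cup \{z\}$, the vertex $y$ lies in $V(G)$ and is simplicial in $G$.

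For the other direction, assume $G = L(H)$ contains a simplicial vertex $e = uv$; I will modify the upper-bound construction to eliminate one of the two added isolates. A short analysis of $N_G(e)$ shows that simpliciality of $e$ in $L(H)$ forces one of three local configurations in $H$: $\deg_H(u) = 1$, $\deg_H(v) = 1$, or $u$ and $v$ share a common neighbor $w$ with $\deg_H(u) = \deg_H(v) = 2$. If $\deg_H(u) = 1$ (or symmetrically $\deg_H(v) = 1$), I order $V(H)$ with the degree-$1$ endpoint last; then $C_u$ is absent from the star cover and the need for a second added vertex disappears. Otherwise, simpliciality of $e$ is equivalent to $C_u \cup C_v$ being a clique of $G$, so we may replace the two star cliques $C_u$ and $C_v$ with their union in the cover, again reducing the required number of added prey by one. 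In either scenario, the main verification is that $C(D)$ still equals $G \cup \{a_1\}$ and $D$ remains acyclic; the clique condition on $N_G(e)$ guaranteed by simpliciality is exactly what keeps extraneous edges from appearing in $C(D)$.
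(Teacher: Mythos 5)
The paper itself does not prove Theorem~\ref{thm:Opsut}---it is quoted from Opsut---but the upper bound is reproved, in a stronger form, by Theorem~\ref{lem:main} specialized to $f\equiv 0$, so that is the natural point of comparison. Two of your three pieces are sound. Your lower-bound argument (take a sink $y$ of $D-z$; its only possible prey is $z$, so $N_{C(D)}(y)$ is either empty or $N_D^-(z)\setminus\{y\}$, which is a clique, whence $y$ is simplicial in $G$) is complete, correct, and self-contained; it proves directly what the paper obtains from Proposition~\ref{opsut:lower} via Lemma~\ref{lem:one-simp}. Your description of the simplicial vertices of $L(H)$ is also correct, though the three-case split is unnecessary: the only fact you use is the one you state in passing, namely that $e=uv$ is simplicial in $L(H)$ if and only if $K_H(u)\cup K_H(v)$ is a clique.

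The genuine gap is that both the bound $k(G)\le 2$ and the direction ``simplicial $\Rightarrow k\le 1$'' rest entirely on the existence of the prey assignment $p$, and you explicitly defer this (``choosing the ordering of $V(H)$ so that this suffices is the main technical obstacle'') without discharging it. That assignment is the actual content of the theorem: you need an injection sending each star clique $K_H(v)$ to a prey $p(v)\notin K_H(v)$, compatible with a single topological order, with only two cliques allowed to use external vertices---and it is not a priori clear this can be arranged (for $H=K_{1,n}$, for instance, the central star can only be sent outside $E(H)$, so one must argue that at most two cliques ever get stuck, simultaneously with acyclicity). The clean way to close this, and the route the paper takes in Theorem~\ref{lem:main}, is induction on $|E(H)|$ with a strengthened invariant: for every edge $uv$ of $H$ there is an acyclic $D$ with $C(D)=L(H)\cup\{z_1,z_2\}$, $N_D^-(z_1)=K_H(u)$ and $N_D^-(z_2)=K_H(v)$; i.e., one controls exactly which two stars spill over to the added vertices, namely those of the endpoints of a designated edge. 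With that statement your sufficiency step becomes immediate: if $e=uv$ is simplicial, then $K_H(u)\cup K_H(v)$ is a clique, so redirect all arcs entering $z_2$ into $z_1$ and delete $z_2$; the in-neighborhood of $z_1$ becomes the clique $K_H(u)\cup K_H(v)$, no spurious competition edges arise, and $C(D)=L(H)\cup I_1$. Until the assignment (or the inductive statement replacing it) is actually proved, the proposal establishes only the necessity direction.
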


In this note, we investigate the competition number
of a generalized line graph
which was introduced by Hoffman \cite{Hoff} in 1970.
For a positive integer $m$,
the \emph{cocktail party graph} $CP(m)$
is the complete multipartite graph with $m$ partite sets
all of which have the size two:
\begin{eqnarray*}
V(CP(m)) &=& \bigcup_{l \in [m]} \{x_l, y_l \} \\
E(CP(m)) &=& \{x_i x_j \mid i,j \in [m], i < j \}
\cup \{y_i y_j \mid i,j \in [m], i < j \} \\
&& \cup \{x_i y_j \mid i,j \in [m], i \neq j \}
\end{eqnarray*}
where $[m]$ denotes the $m$-set $\{1,2,\ldots,m\}$.
Note that $CP(1)$ is the graph with two vertices and no edge. 
A \emph{vertex-weighted graph} $(H,f)$
is a pair of a graph $H$ and
a non-negative integer-valued function $f:V(H) \to \mathbb{Z}_{\geq 0}$
on the vertex set of $H$.
The \emph{generalized line graph} $L(H,f)$
of a vertex-weighted graph $(H, f)$ is the graph obtained from
the disjoint union of the line graph
$L(H)$ of the graph $H$ and
the cocktail party graphs $Q_v:=CP(f(v))$
where $v \in V(H)$ with $f(v)>0$
by adding edges between all the vertices in $Q_v$
and $e \in V(L(H))$ such that $e$ is incident to $v$ in $H$
(see Figure~\ref{fig1}).
For a graph $H$, if $f$ is a zero function
(i.e. $f(v)=0$ for any $v \in V(H)$),
then the generalized line graph of $(H,f)$
is the same as the line graph of $H$.
A graph $G$ is called a \emph{generalized line graph}
if there exists a vertex-weighted graph $(H, f)$
such that $G$ is isomorphic to the generalized line graph of $(H,f)$.

\begin{figure}
  \begin{center}
  \psfrag{a}{$v_1$}
  \psfrag{b}{$v_2$}
  \psfrag{c}{$v_3$}
  \psfrag{d}{$v_4$}
  \psfrag{e}{$v_6$}
  \psfrag{f}{$v_5$}
  \psfrag{A}{$Q_{v_1}$}
  \psfrag{E}{$Q_{v_5}$}
  \psfrag{C}{$Q_{v_3}$}
  \psfrag{H}{$H$}
  \psfrag{G}{$L(H,f)$}
  \includegraphics[width=13cm]{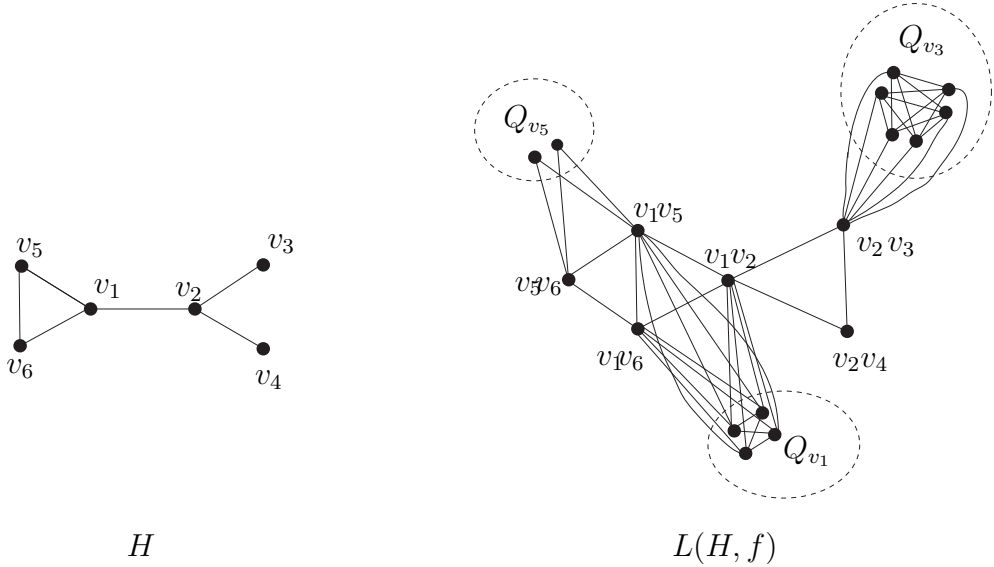}\\
  \caption{A vertex-weighted graph $(H,f)$,
  where $f:V(H) \to \mathbb{Z}_{\geq 0}$ is defined by
  $f(v_1)=2$, $f(v_3)=3$, $f(v_5)=1$, $f(v_2)=f(v_4) =f(v_6) =0$,
  and its generalized line graph $L(H,f)$}
  \label{fig1}
  \end{center}
\end{figure}

In this paper, we show the following result.

\begin{Thm}\label{thm:Main}
The competition number of a generalized line graph is at most two.
\end{Thm}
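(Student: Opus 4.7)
My plan is to realize $L(H,f) \cup \{\theta_1,\theta_2\}$ as the competition graph of an acyclic digraph $D$ on $V(L(H,f)) \cup \{\theta_1,\theta_2\}$. By the standard reformulation of the competition number, this reduces to constructing a linear order $\prec$ on this vertex set and, for each $w$, a (possibly empty) clique $S_w \subseteq \{u : u \prec w\}$ of $L(H,f)$ with $w \notin S_w$, such that the cliques $\{S_w\}$ collectively cover every edge of $L(H,f)$; one then takes $D$ to have arcs $(u,w)$ exactly when $u \in S_w$.

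The candidate cliques come from the local join structure of $L(H,f)$ at each vertex of $H$. For $v \in V(H)$ set $E_v := \{e \in E(H) : v \in e\}$; for any $S \subseteq V(Q_v)$ containing at most one vertex from each partite pair of $Q_v$, the set $E_v \cup S$ is a clique of $L(H,f)$, since by definition $E_v$ is a clique of the line-graph part, $S$ is a clique of $Q_v = CP(f(v))$, and every element of $E_v$ is joined to every vertex of $Q_v$. Since every edge of $L(H,f)$ lies inside $E_v \cup V(Q_v)$ for some $v$, a sufficient collection of such cliques gives an edge clique cover. For each $v$ with $f(v) = m > 0$, I will select the $2m$ ``threshold'' cliques $E_v \cup \{x_1,\ldots,x_l,y_{l+1},\ldots,y_m\}$ and $E_v \cup \{y_1,\ldots,y_l,x_{l+1},\ldots,x_m\}$ (for varying $l$); one checks directly from the threshold pattern that these already cover every edge inside $E_v \cup V(Q_v)$. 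For $v$ with $f(v) = 0$, the single clique $E_v$ suffices.

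For the ordering and prey assignment, I fix an ordering $v_1 < \cdots < v_n$ of $V(H)$ and list $V(L(H,f)) \cup \{\theta_1, \theta_2\}$ by taking, for each $i$ in turn, the vertices of $V(Q_{v_i})$ in a chosen order followed by those edges $v_i v_j \in E(H)$ with $j > i$, ending with $\theta_1, \theta_2$. Each of the $2f(v_i)$ threshold cliques based at $v_i$ will be hosted either at a vertex of $V(Q_{v_i})$, at an edge $v_i v_j$ with $j > i$, or (as a last resort) at $\theta_1$ or $\theta_2$; the $2f(v_i)$ vertices of $V(Q_{v_i})$ provide $2f(v_i)$ local prey slots matching the local threshold cliques, while the single ``line-graph'' clique $E_{v_i}$ (and the few stubborn threshold cliques that cannot fit locally) are placed at later edges of $H$ or at $\theta_1,\theta_2$, broadly following the pattern of the construction underlying Theorem~\ref{thm:Opsut}.

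The delicate case, and the main obstacle I anticipate, is when $v$ is isolated in $H$ while $f(v) > 0$: then $Q_v$ is a whole connected component of $L(H,f)$ with no incident edges available as external preys, so the cliques needed locally must all be hosted inside $V(Q_v) \cup \{\theta_1,\theta_2\}$, a budget of exactly $2f(v) + 2$ slots. Handling this reduces to a direct proof of $k(CP(m)) \le 2$ by an explicit clique-to-slot assignment inside $CP(m) \cup \{\theta_1,\theta_2\}$, where the symmetry between the two $\theta$'s and the $x,y$ split lets one distribute the threshold cliques so that no slot is used twice. Once this special case is in place and the per-$v$ constructions are glued together consistently, the acyclicity of $D$ follows automatically from the prefix-order structure, and the equality $C(D) = L(H,f) \cup \{\theta_1,\theta_2\}$ follows because each $S_w$ is by design a clique of $L(H,f)$ and every edge of $L(H,f)$ is covered by some $S_w$.
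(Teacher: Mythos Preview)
Your overall shape is the right one and is close in spirit to the paper's argument: the paper also handles the line-graph part first and then attaches the cocktail parties $Q_v$ one at a time (via an operation essentially equal to your $G \ltimes_K H$), invoking the known bound $k(CP(m))\le 2$ as a black box rather than reproving it. The difference is that you try to give a single explicit clique-to-slot assignment using your threshold cliques, and that specific choice does not work.

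Here is the concrete obstruction. For $Q_v=CP(m)$, each vertex lies in exactly $m$ of your $2m$ threshold cliques: for instance $x_i\in A_l$ iff $l\ge i$ and $x_i\in B_l$ iff $l<i$, so $x_i$ belongs to $A_i,\dots,A_m,B_1,\dots,B_{i-1}$. Consequently, whichever vertex of $Q_v$ comes last in your linear order $\prec$, the $m$ threshold cliques containing it must all be hosted at strictly later slots. In the isolated-vertex case you single out there are only the two slots $\theta_1,\theta_2$ after it, so the assignment is impossible once $m\ge 3$; in the general case there are at most $|\{\,j>i:v_iv_j\in E(H)\,\}|+2$ later slots, which need not be $\ge m$ either. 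Thus the threshold family, while a perfectly good edge clique cover of $E_v\cup Q_v$, is not compatible with \emph{any} acyclic ordering that uses only two extra vertices. A secondary issue is that in your stated ordering the block $V(Q_{v_i})$ precedes the forward edges $v_iv_j$ with $j>i$, so no clique containing all of $E_{v_i}$ (and every threshold clique does) can be hosted at a $V(Q_{v_i})$ slot at all; your claimed local bijection between the $2f(v_i)$ threshold cliques and the $2f(v_i)$ slots in $V(Q_{v_i})$ therefore cannot hold as written.

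What is missing is exactly a family of cliques for $CP(m)$ that \emph{does} admit such a streaming assignment with two extra vertices, i.e.\ a constructive witness for $k(CP(m))\le 2$. That is a nontrivial separate result (Proposition~\ref{thm:KPS} in the paper, due to Kim--Park--Sano), and it is not delivered by the threshold pattern. Once you import such a witness for each $Q_v$, the gluing step you describe is essentially Proposition~\ref{prop:GHK}, and the proof goes through along the paper's lines.
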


\noindent
This paper is organized as follows.
Section \ref{sec:2} is the main part of this paper.
Subsection \ref{subsec:2-1} gives some observations
on the competition graphs of acyclic digraphs
which will be used in this paper.
Subsection \ref{subsec:2-2} shows that the competition number
of a generalized line graph is at most two.
In Subsection \ref{subsec:2-3},
we investigate generalized line graphs whose competition numbers are one,
and give some sufficient conditions and necessary conditions.
Section \ref{sec:3} gives some concluding remarks.

\section{Main Results}\label{sec:2}
\subsection{Preliminaries}\label{subsec:2-1}

For a digraph $D$ and a vertex $v$ of $D$,
$N_D^+(v)$ and $N_D^-(v)$ denotes the out-neighborhood and
the in-neighborhood of $v$, respectively,
i.e., $N_D^+(v) := \{ x\in V(D)\mid (v,x)\in A(D) \}$
and $N_D^-(v) := \{ x\in V(D)\mid (x,v)\in A(D) \}$.
A digraph is said to be \emph{acyclic} if
it contains no directed cycles.
An ordering
$v_1, \ldots, v_{|V(D)|}$ of the vertices of
a digraph $D$
is called an \emph{acyclic ordering} of $D$
if $(v_i, v_j) \in A(D)$ implies $i<j$.
It is well-known that a digraph is acyclic if and only if
it has an acyclic ordering.

For a clique $S$ of a graph $G$
and an edge $e$ of $G$,
we say that \emph{$e$ is covered by $S$}
if both of the endvertices of $e$ are contained in $S$.
An \textit{edge clique cover} of a graph $G$
is a family of cliques of $G$ such that
each edge of $G$ is covered by some clique in the family.
The \emph{edge clique cover number} $\theta_E(G)$
of a graph $G$
is the minimum size of an edge clique cover of $G$.
A \textit{vertex clique cover} of a graph $G$
is a family of cliques of $G$ such that each vertex of $G$
is contained in some clique in the family.
The \textit{vertex clique cover number} $\theta_V(G)$
of a graph $G$
is the minimum size of a vertex clique cover of $G$.
For a graph $G$ and a vertex $v$ of $G$,
$\theta_V(N_G(v))$ is the vertex clique cover number of
the subgraph of $G$ induced by the neighbors of $v$ in $G$.
Opsut \cite{MR679638} showed the following lower bound
for the competition number of a graph 
(see also \cite{S12} for a generalization of this inequality). 

\begin{Prop}[\cite{MR679638}]\label{opsut:lower}
For any graph $G$,
$k(G) \geq \min_{v \in V(G)} \theta_V(N_G(v))$.
\end{Prop}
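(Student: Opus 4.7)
The plan is to take any acyclic digraph $D$ with $C(D) = G \cup I_k$ (where $I_k$ denotes a set of $k$ isolated vertices and $k = k(G)$), extract a vertex clique cover of $N_G(v)$ for a cleverly chosen vertex $v \in V(G)$, and show that the size of this cover is at most $k$.

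The main structural observation I would use is that in \emph{any} digraph $D$, the in-neighborhood $N_D^-(x)$ of a vertex $x$ is a clique in the competition graph $C(D)$, because any two distinct vertices in $N_D^-(x)$ prey on $x$ and are therefore adjacent in $C(D)$. Dually, for any $v \in V(D)$, the family $\{N_D^-(x) \setminus \{v\} : x \in N_D^+(v)\}$ is a family of cliques in $C(D)$ that covers every neighbor $u$ of $v$: indeed, if $uv \in E(C(D))$, there must be some common prey $x$ with $u,v \in N_D^-(x)$, so $x \in N_D^+(v)$ and $u \in N_D^-(x) \setminus \{v\}$. Restricting to $u \in N_G(v)$, this yields a vertex clique cover of $N_G(v)$ of size at most $|N_D^+(v)|$, giving $\theta_V(N_G(v)) \leq |N_D^+(v)|$.

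To finish, I would choose $v$ to be the \emph{last} vertex of $V(G)$ appearing in an acyclic ordering $w_1, \ldots, w_{n+k}$ of $D$. Say $v = w_i$; then every vertex $w_j$ with $j > i$ is one of the $k$ added isolated vertices, so there are at most $k$ of them. Since the acyclic ordering forces $N_D^+(v) \subseteq \{w_{i+1}, \ldots, w_{n+k}\}$, we get $|N_D^+(v)| \leq k$. Combining with the previous paragraph, $\min_{u \in V(G)} \theta_V(N_G(u)) \leq \theta_V(N_G(v)) \leq |N_D^+(v)| \leq k = k(G)$, which is the desired inequality.

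There is essentially no hard step here; the only mild subtlety is making sure that the chosen $v$ genuinely lies in $V(G)$ (so that the minimum on the right-hand side is taken at a legitimate vertex) and that the vertex clique cover we construct covers $N_G(v)$ and not merely $N_{C(D)}(v)$ — but since $v$ is not isolated in $C(D)$ only through edges of $G$, these coincide.
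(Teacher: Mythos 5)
The paper does not prove this proposition; it is quoted from Opsut's 1982 paper without proof. Your argument is correct and is essentially the standard (indeed Opsut's original) argument: the sets $N_D^-(x)$ are cliques of $C(D)$, the family $\{N_D^-(x) \cap N_G(v) : x \in N_D^+(v)\}$ is a vertex clique cover of the subgraph induced by $N_G(v)$, and choosing $v$ to be the last vertex of $V(G)$ in an acyclic ordering forces $|N_D^+(v)| \leq k(G)$ because all later vertices belong to $I_{k(G)}$. You also correctly handle the two small points that matter --- intersecting the cliques with $N_G(v)$ so they are cliques of the induced subgraph, and noting that edges of $C(D)$ between vertices of $V(G)$ are edges of $G$ --- so there is nothing to fix.
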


For a positive integer $k$,
we denote by $I_k$ the edgeless graph on $k$ vertices,
i.e., the graph with $k$ vertices and no edges.
The following lemma which comes from an easy observation
is elementary but useful.

\begin{Lem}\label{lem:1}
Let $G$ be a graph with at least two vertices
and let $k$ be an integer such that $k\ge k(G)$.
Then there exists an acyclic digraph $D$
such that
\begin{itemize}
\item[{\rm (a)}]
$C(D)=G \cup I_k$,
\item[{\rm (b)}]
$D$ has an acyclic ordering
$v_1, \ldots, v_{|V(G)|}, v_{|V(G)|+1}, \ldots, v_{|V(G)|+k}$,
where $V(G)=\{v_1,  \ldots,$ $v_{|V(G)|} \}$
and $V(I_k)= \{v_{|V(G)|+1}, \ldots, v_{|V(G)|+k} \}$, and
\item[{\rm (c)}]
$N_{D}^{-}(v_1) = N_{D}^{-}(v_2) = \emptyset$.
\end{itemize}
\end{Lem}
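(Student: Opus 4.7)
The plan is to take an arbitrary acyclic digraph $D_0$ with $C(D_0)=G\cup I_k$, which exists because $k\ge k(G)$, and then to prune a small number of arcs so that (b) and (c) are enforced without violating (a). The single tool I will use throughout is the following elementary observation: arcs into a vertex $v$ with $|N_D^-(v)|\le 1$ create no edge of $C(D)$, so if $N_D^-(v)=\{x\}$ then deleting the arc $(x,v)$ leaves the competition graph unchanged; acyclicity is automatically preserved since removing arcs cannot create directed cycles.

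The first step achieves (b) by erasing every arc from $V(I_k)$ to $V(G)$. Suppose $(x,y)\in A(D_0)$ with $x\in V(I_k)$ and $y\in V(G)$. Any second in-neighbor $z$ of $y$ would be adjacent to $x$ in $C(D_0)=G\cup I_k$, but $x$ is isolated there, so no such $z$ exists and $N_{D_0}^-(y)=\{x\}$. The observation above therefore lets us delete $(x,y)$ without changing the competition graph, and iterating over all such arcs yields an acyclic digraph $D_0'$ with $C(D_0')=G\cup I_k$ in which no arc goes from $V(I_k)$ to $V(G)$. Such a digraph admits an acyclic ordering $w_1,\ldots,w_{|V(G)|},w_{|V(G)|+1},\ldots,w_{|V(G)|+k}$ in which the vertices of $G$ precede those of $I_k$, which is precisely what (b) demands.

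For (c), the vertex $w_1$ is automatically a source, so $N_{D_0'}^-(w_1)=\emptyset$. Since $w_2\in V(G)$, the first step guarantees that no in-neighbor of $w_2$ lies in $V(I_k)$, and the ordering restricts the remaining potential in-neighbors of $w_2$ to $\{w_1\}$. If the arc $(w_1,w_2)$ is present in $D_0'$, we delete it by the observation; call the resulting digraph $D$. Then $D$ satisfies (a), (b) and (c), and relabeling $v_i:=w_i$ completes the argument. The whole proof is essentially bookkeeping and presents no real obstacle; the only point to monitor is that each arc removal is justified by the singleton in-neighborhood rule, which is where all the content sits.
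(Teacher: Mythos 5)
Your proof is correct and follows essentially the same route as the paper's: start from any acyclic digraph realizing $G\cup I_k$, delete the arcs leaving $V(I_k)$ (justified by the fact that a vertex with a singleton in-neighborhood contributes no competition edges), read off an acyclic ordering with $V(G)$ first, and finally delete the possible arc $(v_1,v_2)$. The only cosmetic difference is that you remove just the arcs from $V(I_k)$ into $V(G)$ rather than all arcs out of $V(I_k)$, which still suffices for the required ordering.
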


\begin{proof}
By the definition of the competition number of a graph,
there exists an acyclic digraph $D_1$ satisfying (a).
Let $D_2$ be the digraph obtained from $D_1$ by deleting
all the arcs outgoing from any vertices in $I_k$.
Then we can check that $D_2$ is an acyclic digraph satisfying (a) and (b).
Since there is no arc outgoing from any vertex in $I_k$,
there is an acyclic ordering
$v_1, v_2, \ldots, v_{|V(G)|},$ 
$v_{|V(G)|+1}, \ldots, v_{|V(G)|+k}$
of $D_2$ such that $V(G)=\{v_1, v_2, \ldots, v_{|V(G)|}\}$
and $V(I_k)= \{v_{|V(G)|+1}, \ldots, v_{|V(G)|+k} \}$.
By the definition of an acyclic ordering of a digraph,
it holds that
$N_{D_2}^{-}(v_1)=\emptyset$
and $N_{D_2}^{-}(v_2) \subseteq \{v_1\}$.
If $N_{D}^{-}(v_2) = \{ v_1\}$, then let $D$ be the digraph
obtained from $D_2$ by deleting the arc $(v_1,v_2)$.
Otherwise, let $D=D_2$.
Then $D$ is an acyclic digraph satisfying (a), (b), and (c).
Thus the statement holds.
\end{proof}

The competition number of a cocktail party graph
is given by Kim, Park and Sano \cite{KPS2012}.

\begin{Prop}[\cite{KPS2012}]\label{thm:KPS}
The competition number of a cocktail party graph $CP(m)$
with $m \geq 2$
is equal to two.
\end{Prop}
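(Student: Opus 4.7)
The plan is to establish $k(CP(m))\ge 2$ and $k(CP(m))\le 2$ separately. For the lower bound, I would invoke Opsut's inequality (Proposition~\ref{opsut:lower}) together with a direct computation of $\theta_V$ at a vertex neighborhood. For any $v\in V(CP(m))$, say $v=x_i$, the subgraph induced on $N_{CP(m)}(v)$ is obtained by deleting $x_i$ and its unique non-neighbor $y_i$, so it is isomorphic to $CP(m-1)$. Since $m\ge 2$, the graph $CP(m-1)$ contains a non-edge and is therefore not covered by a single clique, so $\theta_V(CP(m-1))\ge 2$; the pair of cliques $\{x_j:j\ne i\}$ and $\{y_j:j\ne i\}$ gives equality. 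Hence $\theta_V(N_{CP(m)}(v))=2$ for every $v$, and Proposition~\ref{opsut:lower} yields $k(CP(m))\ge 2$.

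For the upper bound, I would construct an explicit acyclic digraph $D$ with vertex set $V(CP(m))\cup\{a_1,a_2\}$ satisfying $C(D)=CP(m)\cup I_2$. I would use the acyclic ordering $x_1,x_2,\ldots,x_m,y_1,y_2,\ldots,y_m,a_1,a_2$ and specify the in-neighborhoods, each of which will be a clique of $CP(m)$ among earlier vertices, as follows: set $N_D^-(x_i)=\emptyset$ for every $i$; set $N_D^-(y_1)=\{x_1,\ldots,x_m\}$, which supplies every $x_ix_j$ edge; for $2\le j\le m$ set $N_D^-(y_j)=\{x_i:i\ne j-1\}\cup\{y_{j-1}\}$, which is a clique (note $x_iy_{j-1}$ is an edge precisely when $i\ne j-1$) and supplies every cross edge $x_iy_{j-1}$ with $i\ne j-1$; set $N_D^-(a_1)=\{y_1,\ldots,y_m\}$, which supplies every $y_iy_j$ edge; and finally set $N_D^-(a_2)=\{x_1,\ldots,x_{m-1},y_m\}$, which supplies every remaining cross edge $x_iy_m$ with $i<m$.

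The remaining checks are routine: each $N_D^-(v)$ is a clique of $CP(m)$ by direct inspection, so no spurious edge can appear in $C(D)$; every edge of $CP(m)$ is produced by some $N_D^-(v)$ (the cross edge $x_iy_l$ with $i\ne l$ comes from $N_D^-(y_{l+1})$ when $l<m$ and from $N_D^-(a_2)$ when $l=m$); and $a_1,a_2$ are isolated in $C(D)$ because neither of them appears in any $N_D^-(v)$. The only conceptual step that really requires thought is noticing that the cross edges can be parceled out by their $y$-coordinate $l=1,\ldots,m$ and absorbed one layer at a time by the ``next'' available later vertex in the ordering, with the final layer ($l=m$) taken up by $a_2$; once this is seen the construction and its verification are mechanical, so I do not expect any genuine obstacle.
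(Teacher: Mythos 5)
Your proposal is correct, but note that the paper itself gives no proof of this proposition: it is imported verbatim from the cited reference \cite{KPS2012}, where it arises as a special case of a more general computation of competition numbers of complete multipartite graphs with many partite sets. Your argument is therefore a genuinely different (and self-contained) route. Both halves check out. For the lower bound, the neighborhood of $x_i$ in $CP(m)$ is indeed everything except $x_i$ and $y_i$, inducing a copy of $CP(m-1)$, which for $m\ge 2$ contains a non-adjacent pair and hence has vertex clique cover number exactly $2$; Proposition~\ref{opsut:lower} then gives $k(CP(m))\ge 2$. For the upper bound, I verified your digraph: each prescribed in-neighborhood is a clique of $CP(m)$ consisting only of vertices earlier in the ordering $x_1,\ldots,x_m,y_1,\ldots,y_m,a_1,a_2$ (in particular $\{x_i: i\ne j-1\}\cup\{y_{j-1}\}$ is a clique because $x_iy_{j-1}\in E(CP(m))$ exactly when $i\ne j-1$, and $\{x_1,\ldots,x_{m-1},y_m\}$ is a clique for the same reason), every edge of $CP(m)$ is realized by exactly the in-neighborhood you name, and $a_1,a_2$ occur in no in-neighborhood, so they are isolated in $C(D)$. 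Hence $C(D)=CP(m)\cup I_2$ and $k(CP(m))\le 2$. The payoff of your approach is a short, elementary, fully explicit proof of exactly the case needed here; the payoff of the paper's approach is that it leans on a stronger published theorem covering all complete multipartite graphs at once. One small presentational point: you might state explicitly that an edge of $C(D)$ can only arise from two vertices sharing a common prey, so the clique property of each $N_D^-(\cdot)$ is precisely what rules out spurious edges.
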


\subsection{Proof of Theorem \ref{thm:Main}}\label{subsec:2-2}

In this subsection,
we show that the competition number of a generalized line graph
is at most two.

For two vertex-disjoint graphs $G$ and $H$ and a clique $K$ of $G$,
we define the graph $G \ltimes_K H$
by
\begin{eqnarray*}
V(G \ltimes_K H) &:=& V(G)\cup V(H) \\
E(G \ltimes_K H) &:=& E(G)\cup E(H) \cup \{ uv\mid u\in K, v\in V(H)\}
\end{eqnarray*}
(see Figure~\ref{GKH}).

\begin{figure}
  \begin{center}
  \psfrag{G}{$G$}
  \psfrag{K}{$K$}
  \psfrag{H}{$H$}
  \includegraphics{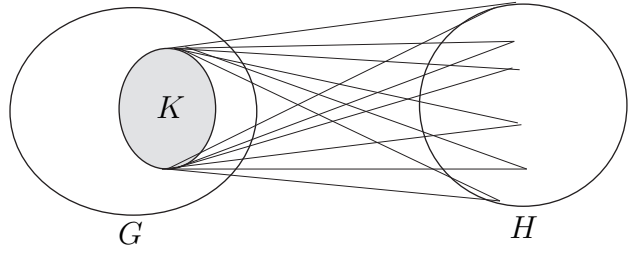}\\
  \caption{$G\ltimes_K H $}\label{GKH}
  \end{center}
\end{figure}

From Lemma \ref{lem:1},
we introduce a notion of top-two of a graph as follows.

\begin{Defi}
{\rm
For a graph $G$, a set $\{u,v\}$ of two distinct vertices $u$ and $v$ of $G$
is called a \emph{top-two} of $G$
if there exists an acyclic digraph $D$
such that $C(D) = G \cup I_{k(G)}$ and
$D$ has an acyclic ordering whose first and second vertices
are $u$ and $v$.
}
\qed
\end{Defi}

Note that any
graph
with at least two vertices always
has at least one top-two.
If a graph $G$
has no edges,
then any pair of two vertices of $G$
is a top-two of $G$.

\begin{Prop}\label{prop:GHK}
Let $G$ and $H$ be graphs with at least two vertices
such that $V(G) \cap V(H) = \emptyset$
and let $K$ be a clique of $G$.
Suppose that there exists an acyclic digraph $D'$
such that $C(D') = G \cup \{u_1,u_2\}$ 
where $\{u_1,u_2\}$ is a top-two of $H$.
If either $H$ has no edges or $H$
has no isolated vertices,
then there exists an acyclic digraph $D$ satisfying the following:
\begin{itemize}
\item[{\rm (i)}]
$C(D) = \left\{
\begin{array}{ll}
(G \ltimes_K H)  \cup I_{2}
& \text{ if } H \text{ has no edges}, \\
(G \ltimes_K H ) \cup I_{|k(H)|}
& \text{ if } H \text{ has no isolated vertices},
\end{array}
\right.$
and
\item[{\rm (ii)}]
$D$ has an acyclic ordering
whose first $|V(G)|+2$ terms induce the digraph $D'$.
\end{itemize}
\end{Prop}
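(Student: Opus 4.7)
The plan is to construct $D$ by splicing $D'$ onto an acyclic digraph $D_H$ realizing $H$, using $u_1,u_2$ as the seam, and then inserting extra arcs from $K$ to selected prey vertices on the $H$-side so that each $v\in V(H)$ competes with each $k\in K$. To set up, invoke the top-two assumption to fix an acyclic digraph $D_H$ with $C(D_H)=H\cup I_{k(H)}$ whose acyclic ordering begins with $u_1,u_2$, and apply Lemma~\ref{lem:1} to arrange that $N^{-}_{D_H}(u_1)=N^{-}_{D_H}(u_2)=\emptyset$ and that no vertex of $I_{k(H)}$ has an out-neighbor.

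When $H$ has no isolated vertices, take $V(D):=V(D')\cup V(D_H)$ (with $V(D')\cap V(D_H)=\{u_1,u_2\}$) and
$$A(D):=A(D')\cup A(D_H)\cup\bigl\{(k,w):k\in K,\ w\in V(D_H),\ N^{-}_{D_H}(w)\neq\emptyset\bigr\}.$$
Order $V(D)$ by concatenating $D'$'s acyclic order with $D_H$'s order restricted to $V(D_H)\setminus\{u_1,u_2\}$. The normalization forces every new $(k,w)$-arc to have $w\notin\{u_1,u_2\}$, hence $w$ strictly after $V(D')$ in the order, so the concatenation is acyclic and the induced subdigraph on the first $|V(G)|+2$ vertices is exactly $D'$. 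A direct inspection of out-neighborhoods then confirms $C(D)=(G\ltimes_K H)\cup I_{k(H)}$: the edges of $G$ come from $D'$; the edges of $H$ come from $D_H$; each $v\in V(H)$, being non-isolated in $H$, has some $w\in N^{+}_{D_H}(v)$, and at this $w$ both $v$ and every $k\in K$ share prey, giving the required $K$-to-$V(H)$ edges; for each such $w$, $N^{-}_D(w)=N^{-}_{D_H}(w)\cup K$ is a clique of $G\ltimes_K H$ because $N^{-}_{D_H}(w)\subseteq V(H)$ is a clique of $H$ and all of $V(H)$ is joined to $K$; and $I_{k(H)}$ stays isolated since its vertices have no out-arcs.

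When $H$ has no edges, $D_H$ is arc-less and $I_{k(H)}=\emptyset$, so the recipe above produces nothing useful. Instead, enumerate $V(H)=\{v_1=u_1,v_2=u_2,v_3,\ldots,v_n\}$, add two fresh vertices $z_1,z_2$, put $p_i:=v_i$ for $3\le i\le n$ and $p_{n+1}:=z_1$, $p_{n+2}:=z_2$, and let
$$A(D):=A(D')\cup\bigcup_{i=3}^{n+2}\bigl\{(v_{i-2},p_i)\bigr\}\cup\bigcup_{i=3}^{n+2}\bigl\{(k,p_i):k\in K\bigr\}.$$
Each $v_j$ then competes with all of $K$ at the prey $p_{j+2}$; all new preys $p_3,\ldots,p_{n+2}$ are distinct and lie outside $V(D')$; and $z_1,z_2$ remain sinks. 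Hence $C(D)=(G\ltimes_K H)\cup I_2$ with $I_2=\{z_1,z_2\}$, and $D'$ is again induced on the first $|V(G)|+2$ vertices of the acyclic order $V(D'),v_3,\ldots,v_n,z_1,z_2$.

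The chief obstacle in both cases is preventing new arcs from creating either a spurious competition inside $V(D')$ (which would corrupt the prescribed $C(D')=G\cup\{u_1,u_2\}$) or a directed cycle. Both are resolved by the top-two normalization $N^{-}_{D_H}(u_i)=\emptyset$: it forces every newly added arc to have its head strictly after $V(D')$ in the ordering, thereby sealing $V(D')$ off from any incoming new arc and giving condition~(ii) for free.
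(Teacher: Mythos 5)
Your proposal is correct and follows essentially the same construction as the paper: in the isolated-vertex-free case you concatenate $D'$ with a digraph realizing $H\cup I_{k(H)}$ whose acyclic ordering starts at the top-two and add arcs from $K$ into the $H$-side (the paper sends $K$ to all of $V(D'')\setminus\{u_1,u_2\}$ rather than only to vertices with nonempty in-neighborhood, an immaterial variant), and in the edgeless case you use the same shifted-prey scheme with two fresh sink vertices. No gaps.
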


\begin{proof}
Suppose that $H$ has no edges.
Let $V(H)=\{u_1,u_2,\ldots,u_m\}$, where $m \geq 2$.
We define a digraph $D$ by
$V(D) := V(G) \cup V(H) \cup \{u_{m+1},u_{m+2} \}$ and
\[
A(D) := A(D') \cup \left( \bigcup_{i=1}^{m}
\{ (x,u_{i+2}) \mid x \in K \cup \{u_i\} \} \right)
\]
where $u_{m+1}$ and $u_{m+2}$ are 
new 
vertices.
Then, it is easy to see that $D$ is an acyclic digraph
satisfying (i) and (ii).

Suppose that $H$ has no isolated vertices.
Let $k:=k(H)$.
By the assumption that $\{u_1,u_2\}$ is a top-two of $H$,
there exists an acyclic digraph $D''$
such that $C(D'') = H \cup I_k$ and
the two vertices $u_1$ and $u_2$ in $D''$ satisfy
$N_{D''}^{-}(u_1)=N_{D''}^{-}(u_2) = \emptyset$.
Let $D$ be the digraph defined by $V(D) := V(D') \cup V(D'')$ and
$A(D) := A(D') \cup A(D'') \cup A^*$
where
\[
A^* := \{ (u,v) \mid u \in K, v \in V(D'') \setminus \{u_1,u_2\} \}.
\]
Since the ordering obtained
by attaching an acyclic ordering of $D''$ at the end of
an acyclic ordering of $D'$ gives an acyclic ordering of $D$,
$D$ is acyclic and satisfies (ii).
Since $H$ has no isolated vertices,
each vertex in $H$ has an out-neighbor in $D''$.
Therefore, each edge of $G \ltimes_K H$
between
a vertex in $K$
and a vertex of $H$
is an edge of $C(D)$ which
results from $A^*$. 
Thus $C(D) = (G \ltimes_K H) \cup I_k$ and so $D$ satisfies (i).
\end{proof}

For any vertex $v$ of a graph $H$, let $K_H(v)$ denote
the set of the edges incident to $v$ in $H$.
Note that $K_H(v)$ forms a clique of the line graph of $H$
for each vertex $v$
in a graph $H$ and $\{ K_H(v) \mid v \in V(H) \}$
is an edge clique cover
of the line graph of $H$.

\begin{Thm}\label{lem:main}
Let $(H,f)$ be a vertex-weighted graph.
For any edge $e=uv$ of $H$,
there exists an acyclic digraph $D$ such that
$C(D) = L(H,f) \cup I_2$ and
that
$N_{D}^-(z_1)=K_H(u)$ and $N_{D}^-(z_2)=K_H(v)$,
where $V(I_2)=\{z_1,z_2\}$.
\end{Thm}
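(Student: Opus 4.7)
The plan is to proceed by induction on the total weight $W := \sum_{v \in V(H)} f(v)$.

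\textbf{Base case ($W = 0$, the line-graph case).} Here $L(H,f)=L(H)$, the ordinary line graph. I fix a linear ordering of $V(H)$ starting $u, v, w_3, \ldots, w_n$ in which every $w_i$ ($i\ge 3$) has a neighbor in $\{w_1, \ldots, w_{i-1}\}$; this is a BFS ordering rooted at $\{u,v\}$ (treating components separately if $H$ is disconnected). For each $w \in V(H)\setminus\{u,v\}$, I select a prey $p_w \in V(L(H))$ which is an edge of $H$ not incident to $w$ and which can be placed later than every edge of $K_H(w)$ in the acyclic order of $D$; setting $N_D^-(p_w)=K_H(w)$ covers all line-graph edges at $w$. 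Finally, taking $N_D^-(z_1)=K_H(u)$ and $N_D^-(z_2)=K_H(v)$ handles the star-cliques at $u$ and $v$. Every edge of $L(H)$ lies in some $K_H(w)$, so $C(D) = L(H) \cup I_2$ as required.

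\textbf{Inductive step ($W > 0$).} Pick $v_0 \in V(H)$ with $f(v_0)>0$ and let $f'$ agree with $f$ except $f'(v_0)=f(v_0)-1$. By induction there exists an acyclic $D'$ with $C(D')=L(H,f')\cup I_2$ satisfying the required in-neighborhood condition at $z_1, z_2$. I extend $D'$ to $D$ by inserting two new cocktail-party vertices $x^* := x_{f(v_0)}^{v_0}$ and $y^* := y_{f(v_0)}^{v_0}$ into the acyclic ordering just before $z_1, z_2$, and declaring
\[
N_D^-(y^*) \;=\; K_H(v_0) \cup \{x_i^{v_0} : 1 \le i < f(v_0)\} \cup \{x^*\},
\]
a clique of $L(H,f)$ covering all new edges $\{x^*,k\}$ for $k \in K_H(v_0)$ and $\{x^*,x_i^{v_0}\}$ for $i<f(v_0)$. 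Symmetrically, $N_D^-(x^*) := K_H(v_0) \cup \{y_i^{v_0} : 1\le i < f(v_0)\}$ covers all edges $\{y^*,k\}$ and $\{y^*,y_i^{v_0}\}$. The only new edges not yet covered are the ``cross'' edges $\{x^*,y_i^{v_0}\}$ and $\{y^*,x_i^{v_0}\}$ for $i<f(v_0)$; each of these can be absorbed by augmenting the in-neighborhood of a carefully chosen existing cocktail vertex of $Q_{v_0}$ (say $y_j^{v_0}$ with $j\ne i$), which remains a valid clique of $L(H,f)$ thanks to the adjacency structure of $CP(f(v_0))$.

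\textbf{Main obstacle.} The central difficulty is maintaining acyclicity even though both $x^*$ and $y^*$ need to serve as preys witnessing edges incident to the other. Since $\{x^*, y^*\}$ is a non-adjacent pair (a partite class of $CP(f(v_0))$), at most one direction of arcs between them can occur in an acyclic digraph---this is exactly the ``top-two'' property formalized in the Definition preceding Proposition~\ref{prop:GHK}. Routing the remaining cross-edge coverage through other cocktail vertices of $Q_{v_0}$ resolves the issue, and checking that no spurious competition edges arise from the augmentations is the main bookkeeping task. A cleaner alternative route is to peel off the whole $Q_{v_0}$ at once and appeal to Proposition~\ref{prop:GHK} with $G = L(H,f')$, $K = K_H(v_0)$, and $H = Q_{v_0}$; but one must then verify that after attachment the final two isolated vertices carry precisely the in-neighborhoods $K_H(u)$ and $K_H(v)$ demanded by the statement, which ultimately requires the same top-two bookkeeping.
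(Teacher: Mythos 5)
Your inductive step contains a genuine error, not just unfinished bookkeeping. For an edge $\{y^*,k\}$ with $k\in K_H(v_0)$ to appear in $C(D)$, the vertices $y^*$ and $k$ must have a \emph{common out-neighbor}; the set $N_D^-(x^*)=K_H(v_0)\cup\{y_i^{v_0}: i<f(v_0)\}$ does not contain $y^*$, so it witnesses nothing about edges incident to $y^*$. To make $x^*$ a prey for a clique containing $y^*$ you would need the arc $(y^*,x^*)$, which together with $(x^*,y^*)$ (forced by $x^*\in N_D^-(y^*)$) is a directed $2$-cycle. So it is not only the cross edges $\{x^*,y_i^{v_0}\}$, $\{y^*,x_i^{v_0}\}$ that are uncovered: \emph{all} edges from $y^*$ to $K_H(v_0)\cup\{y_i^{v_0}\}$ are. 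The underlying obstruction is that $N_{CP(m)}(x^*)$ induces $CP(m-1)$, whose vertex clique cover number is $2$, so each of $x^*,y^*$ needs two prey while your step introduces only two new vertices; rerouting through existing vertices of $Q_{v_0}$ both enlarges their in-neighborhoods (risking spurious competition edges with whatever was already there) and requires arcs from the new vertices back into $D'$ (threatening acyclicity), and you do not carry this out. Your base case is also under-justified: the whole difficulty is the \emph{simultaneous} choice of prey $p_w$ that are pairwise distinct where necessary and compatible with one acyclic order. For $H=K_4$ with $e=v_1v_2$, taking $p_{w_3}=p_{w_4}$ puts $K_H(w_3)\cup K_H(w_4)$ into one in-neighborhood and creates the non-edge $\{v_1v_3,v_2v_4\}$ in $C(D)$, while several choices of distinct prey force a cycle in the ordering constraints; only some choices work, and you give no rule producing one.

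The paper avoids both problems. The line-graph case is done by induction on $|E(H)|$: delete $e=uv$, apply the hypothesis to $H-e$ with an edge $e'$ incident to $u$, and reuse one of the two returned isolated vertices as the vertex $e$ of $L(H)$, so the consistent prey assignment is built up one edge at a time rather than chosen globally. For the weights, each whole cocktail party graph $Q_v$ is attached in a single step via Proposition~\ref{prop:GHK}: the two isolated vertices of the current stage are recycled as a top-two of $CP(m)$, Proposition~\ref{thm:KPS} ($k(CP(m))=2$ for $m\ge 2$) supplies an optimal digraph for $CP(m)$ whose first two vertices have empty in-neighborhoods (Lemma~\ref{lem:1}), and a fresh $I_2$ comes back out with $N^-(z_1)$, $N^-(z_2)$ untouched. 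This is exactly the ``cleaner alternative'' you name in your final sentence; the top-two bookkeeping you defer there is already packaged in Lemma~\ref{lem:1} and Proposition~\ref{prop:GHK}, so that alternative is the proof, whereas your pair-by-pair induction does not close.
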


\begin{proof}
Let $G := L(H,f)$ for convenience.
First, we consider the case where $f$ is a zero function.
We show the theorem by induction on the number of edges of $H$.
If $H$ has at most one edge, then the statement is checked easily.
Assume that the statement is true for any graph with $m-1$ edges
where $m \geq 2$.
Let $H$ be a graph with $m$ edges.
It is sufficient to consider the case where $H$ is connected.
Take an edge $e=uv$ of $H$.
Since $m \geq 2$ and $H$ is connected,
there exists an edge $e'$
incident
to $e$ in $H$.
Without loss of generality,
we may assume that
the vertex $u$ is also an endvertex of $e'$.
Let $H'$ be the graph obtained from $H$ by deleting the edge $e$.
Then $L(H')$ is the graph obtained from $L(H)$ by deleting the vertex $e$.
Since $H^\prime$ has $m-1$ edges,
by the induction hypothesis,
there exists an acyclic digraph $D'$ such that
$C(D') = L(H^\prime) \cup \{z_1, e \}$
and $N_{D^\prime}^-(z_1)=K_{H^\prime}(u)$.

Now we define a digraph $D$ by
$V(D) := V(D') \cup \{z_2\} = V(L(H)) \cup \{z_1,z_2\}$ and
\[
A(D) := A(D') \cup \{ (e,z_1) \} \cup \{(e'',z_2) \mid e'' \in K_H(v)\}.
\]
Then the ordering of the vertices in $D$
obtained from an acyclic ordering of $D'$
by adding the vertex $z_2$ to it as the last term
is an acyclic ordering of $D$, and so $D$ is acyclic.
By the definitions of $D$ and $H'$, $N_D^-(z_2)=K_{H}(v)$ and
$N_{D}^-(z_1) = N_{D^\prime}^-(z_1) \cup \{ e \}
= K_{H'}(u) \cup \{ e \} = K_{H}(u)$.
It is easy to see that $C(D)=L(H)\cup\{z_1,z_2\}$.
Thus the theorem holds when $f$ is a zero function.

Second, we consider the case where
$f$ is not a zero function.
Let $v_1, v_2, \ldots, v_n$ be the vertices of $H$ such that $f(v_i)>0$.
For each $i \in \{0\} \cup [n]$,
we define a graph $G_i$
by
\[
G_0 := L(H) \quad
\text{ and } \quad
G_i := G_{i-1} \ltimes_{K_H(v_i)} Q_{v_i}
\quad (i \in [n]),
\]
where $Q_{v_i}=CP(f(v_i))$. Note that $G_n=G$.

Take an edge $e=uv$ of $H$.
Since $f(v_1)\neq 0$, $Q_{v_1}$ has at least two vertices.
Then $Q_{v_1}$ has a top-two $\{z_1,z_2\}$.
Since $G_0$ is a line graph,
by the above argument on the case of $f=0$,
there exists an acyclic digraph $D_0$ such that
$C(D_0)=G_0 \cup \{ z_1,z_2\}$ and $N_{D_0}^-(z_1)=K_H(u)$
and $N_{D_0}^-(z_2)=K_H(v)$.

Since $Q_{v_1}$
has no edges or $Q_{v_1}$ is connected,
it follows from
Propositions \ref{thm:KPS} and \ref{prop:GHK} that
there exists an acyclic digraph $D_1$
such that $C(D_1) = G_1 \cup I_{2}$
and
$D_1$ has an acyclic ordering whose first $|V(G_0)|+2$ terms
induce the digraph $D_0$.
Then $N_{D_1}^-(z_1) = N_{D_0}^-(z_1) = K_H(u)$
and $N_{D_1}^-(z_2) = N_{D_0}^-(z_2) = K_H(v)$.
From repeating the process, we can obtain an acyclic digraph $D_n$
such that $C(D_n)=G_n\cup I_2$ and $N_{D_n}^-(z_1)=K_H(u)$
and $N_{D_n}^-(z_2)=K_H(v)$.
Let $D:=D_n$.
Since $G_n=G$,
the theorem holds.
\end{proof}

\begin{proof}[Proof of Theorem~\ref{thm:Main}]
It immediately follows
from Theorem~\ref{lem:main}
that the competition number of
a generalized line graph is at most two.
\end{proof}

\subsection{Generalized line graphs with competition number one}
\label{subsec:2-3}

In the following, we
show some
necessary conditions and sufficient conditions
for the competition number of a connected generalized line graph being one.
Theorem~\ref{thm:Opsut} says that
the competition number of a connected line graph is one
if and only if it has a simplicial vertex.

Since the case of the generalized line graph $L(H,f)$
of a vertex-weighted graph $(H,f)$ with a zero function $f$
is reduced to Theorem \ref{thm:Opsut},
we consider the case
where $f$ is
a nonzero function.
In this subsection, when we consider the generalized line graph
$L(H,f)$
of a vertex-weighted graph $(H,f)$,
we denote
the cocktail party graph $CP(f(v))$ added to $L(H)$
by $Q_v$
for each vertex $v$ of $H$
in cases where this notation will not cause confusion.

First, we give necessary conditions for the competition number
of a connected generalized line graph being one.

\begin{Lem}\label{lem:one-simp}
If a graph $G$ has competition number one,
then $G$ has a simplicial vertex or an isolated vertex.
\end{Lem}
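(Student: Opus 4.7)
The plan is to exploit an acyclic ordering of a digraph realizing $G\cup I_1$ as a competition graph and read off the desired vertex from the bottom of that ordering.

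Suppose $k(G)=1$, so there exists an acyclic digraph $D$ with $C(D)=G\cup I_1$; let $z$ denote the unique added isolated vertex, and fix an acyclic ordering of $V(D)$. Let $w$ be the last vertex in this ordering. Then $N_D^+(w)=\emptyset$, and since every edge of $C(D)$ incident to $w$ must arise from a common out-neighbor of $w$ with some other vertex, $w$ is isolated in $C(D)$. If $w\neq z$, then $w$ is already an isolated vertex of $G$, and we are done.

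Otherwise $w=z$, and I would look at the second-to-last vertex $w'$ in the ordering. By the acyclic ordering, $N_D^+(w')\subseteq\{z\}$. If $N_D^+(w')=\emptyset$, then $w'$ is isolated in $C(D)$ by the same reasoning as above, and since $w'\neq z$ we get an isolated vertex of $G$. If $N_D^+(w')=\{z\}$, then every edge of $G$ incident to $w'$ must come from the common prey $z$, so $N_G(w')=N_D^-(z)\setminus\{w'\}$. But $N_D^-(z)$ is a clique in $C(D)$ (any two in-neighbors of $z$ share $z$ as a common out-neighbor), so $N_G(w')$ is a clique in $G$, meaning $w'$ is a simplicial vertex of $G$.

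I do not expect a real obstacle here; the key observation is simply that the last and second-to-last vertices of any acyclic ordering have severely restricted out-neighborhoods in $D$, and when only one extra isolated vertex is available, these restrictions force either isolation or simpliciality in $G$. The only small care needed is to use the added isolated vertex $z$ itself to distinguish the two cases, and to recall the standard fact that $N_D^-(z)$ is a clique in $C(D)$.
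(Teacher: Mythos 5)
Your proof is correct, but it takes a different route from the paper. The paper's proof is a one-line appeal to Opsut's lower bound (Proposition~\ref{opsut:lower}): if $G$ has neither a simplicial nor an isolated vertex, then $\theta_V(N_G(v))\ge 2$ for every vertex $v$, so $k(G)\ge 2$. You instead argue directly from an acyclic digraph $D$ with $C(D)=G\cup I_1$, examining the last two vertices of an acyclic ordering and using the standard fact that $N_D^-(z)$ is a clique of $C(D)$; this is in effect a self-contained reproof of the $k=1$ case of Opsut's bound. Your case analysis is sound: the last vertex has empty out-neighborhood and hence is isolated in $C(D)$, and when that vertex is the added vertex $z$, the second-to-last vertex $w'$ satisfies $N_D^+(w')\subseteq\{z\}$, forcing $N_G(w')=N_D^-(z)\setminus\{w'\}$ to be a clique (or empty), so $w'$ is simplicial or isolated. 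What the paper's approach buys is brevity and reuse of a cited general inequality; what yours buys is independence from that proposition and an explicit view of \emph{where} in the digraph the simplicial or isolated vertex sits, which is in the spirit of the ordering-based arguments used elsewhere in the paper (e.g., Lemma~\ref{lem:1}).
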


\begin{proof}
If $G$ has no simplicial vertex and no isolated vertex,
then the competition number of $G$ is at least two
by Proposition \ref{opsut:lower}.
Thus the lemma holds.
\end{proof}

\begin{Thm}\label{thm:Main2}
Let $G$ be the generalized line graph of
a connected vertex-weighted graph $(H,f)$ with a nonzero function $f$.
If $k(G)=1$, then at least one of the following holds:
\begin{itemize}
\item[{\rm (i)}]
$f(v)=1$ for some vertex $v$ of $H$,
\item[{\rm (ii)}] There exists a vertex $v$ of $H$
such that $f(v)=0$
and $K_H(v)$ contains a simplicial vertex of $G$.
\end{itemize}
\end{Thm}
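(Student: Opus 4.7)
The plan is to assume $k(G)=1$ and deduce (i) or (ii). By Lemma~\ref{lem:one-simp}, $G$ must have either an isolated vertex or a simplicial vertex. Since $V(G)=V(L(H))\cup\bigcup_{v:f(v)>0}V(Q_v)$, I would split the analysis according to where this special vertex sits.

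First I would dispose of the isolated-vertex case. An edge $e=xy$ of $H$ can be isolated in $G$ only when $f(x)=f(y)=0$ and $\deg_H(x)=\deg_H(y)=1$; connectedness of $H$ then forces $H=\{x,y\}$ with $f$ identically zero, contradicting the hypothesis that $f$ is nonzero. A vertex $w\in Q_v$ can be isolated in $G$ only when $K_H(v)=\emptyset$ and $w$ has no neighbor inside $Q_v$, forcing $f(v)=1$ and $v$ isolated in $H$; by connectedness this gives $H=\{v\}$ with $f(v)=1$, so (i) holds.

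Next I would carry out the simplicial-vertex case by examining $N_G(w)$ and exploiting the cocktail-party structure. If $w\in Q_v$, then $N_G(w)$ contains $N_{Q_v}(w)$, and whenever $f(v)\ge 2$ this subset contains a non-adjacent partner pair of $Q_v$, contradicting that $N_G(w)$ is a clique; hence $f(v)=1$ and (i) holds. If instead $w=e=xy$ is a vertex of $L(H)$, two short observations exclude certain weight patterns: (a) $f(x)\ge 2$ is impossible because any partner pair of $Q_x$ gives two neighbors of $e$ that are non-adjacent in $G$; and (b) $f(x)>0$ together with $f(y)>0$ is impossible because a vertex of $Q_x$ and a vertex of $Q_y$ are both neighbors of $e$ but have no edge between them. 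Consequently $f(x),f(y)\in\{0,1\}$ with not both positive, so either one of them equals $1$ (giving (i)) or $f(x)=f(y)=0$, in which case $e\in K_H(x)$ is a simplicial vertex of $G$ with $f(x)=0$, giving (ii).

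The main subtlety to execute carefully will be the consistent use of the definition of $L(H,f)$ when certifying that the listed pairs of neighbors of $w$ really are non-adjacent in $G$ in each obstruction situation (a partner pair inside a single $Q_v$, and a pair drawn from two different $Q_x$ and $Q_y$); once those two observations are in hand, the case split collapses quickly to the dichotomy (i)/(ii).
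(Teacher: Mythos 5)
Your proposal is correct and follows essentially the same route as the paper: invoke Lemma~\ref{lem:one-simp}, locate the simplicial (or isolated) vertex within $K_H(v)\cup Q_v$, and use the fact that $CP(m)$ with $m\ge 2$ places a non-adjacent partner pair in the relevant neighborhood to force $f(v)\le 1$. The only differences are presentational: the paper argues by contradiction after assuming (i) fails and dismisses the isolated-vertex case by noting $G$ is connected, whereas you run a direct case analysis and treat isolated vertices explicitly.
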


\begin{proof}
Assume that $k(G)=1$.
Suppose that (i) does not hold.
Then $f(u)\neq 1$ for any vertex $u \in V(H)$.
As $k(G)=1$ and $G$ is connected,
$G$ has a simplicial vertex $x$ by Lemma \ref{lem:one-simp}.
Since $V(G) = \bigcup_{u \in V(H)}\left( K_H(u) \cup Q_u \right)$,
the simplicial vertex
$x$ is contained in $K_H(v) \cup Q_v$ for some $v\in V(H)$.
Since $f(v) \neq 1$,
either $f(v)=0$ or $f(v)\ge 2$.
If $f(v) \geq 2$,
then any vertex in $K_H(v) \cup Q_v$
is not simplicial, which is a contradiction.
Therefore $f(v)=0$.
Thus the simplicial vertex $x$ is contained in $K_H(v)$.
Hence (ii) holds.
\end{proof}

\begin{Rem}
{\rm
The conditions (i) and (ii) in Theorem~\ref{thm:Main2}
are not sufficient conditions for generalized line graphs
to have competition number one.
}
\qed
\end{Rem}

\begin{figure}
  \begin{center}
  \psfrag{a}{$v_1$}
  \psfrag{b}{$v_2$}
  \psfrag{c}{$v_3$}
  \psfrag{d}{$v_4$}
  \psfrag{H}{$H$}
  \psfrag{P}{$Q_{v_3}$}
  \psfrag{Q}{$Q_{v_4}$}
  \psfrag{G}{$L(H,f)$}
  \includegraphics{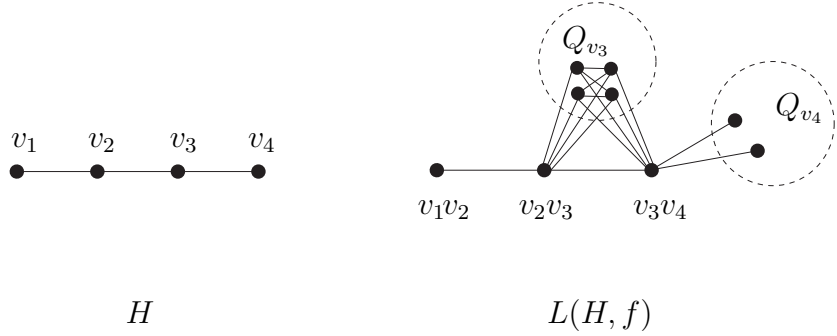}\\
  \caption{A vertex-weighted graph $(H,f)$,
  where $f:V(H) \to \mathbb{Z}_{\geq 0}$ is defined by
  $f(v_1)=f(v_2)=0$, $f(v_3)=2$, $f(v_4)=1$,
  and its generalized line graph $L(H,f)$}
  \label{fig:ex}
  \end{center}
\end{figure}

\begin{Ex}
{\rm
Let $(H,f)$ be the vertex-weighted graph 
where $H$ is the graph defined by 
$V(H)=\{ v_1, v_2,$ $v_3, v_4 \}$ 
and $E(H)=\{ v_1v_2, v_2v_3, v_3v_4\}$ 
and 
$f:V(H) \to \mathbb{Z}_{\geq 0}$ is 
defined by $f(v_1)=f(v_2)=0$, $f(v_3)=1$,
$f(v_4) =2$ 
(see Figure~\ref{fig:ex}).
Then, the generalized line graph of $(H,f)$
satisfies both (i) and (ii) of Theorem~\ref{thm:Main2}. 
But, the competition number of $L(H,f)$ is two. 

To see this, we recall a result by Kim \cite{K} 
which states that 
the deletion of some pendant vertices from a connected graph 
does not change its competition number 
if the resulting graph has at least two vertices. 
Let $G'$ be 
the graph obtained from $L(H,f)$ 
by deleting the two vertices in $Q_{v_4}$ and the vertex $v_1v_2$. 
Then, $k(G') = k(L(H,f))$. 
Since $G'$ contains neither an isolated vertex nor a simplicial vertex, 
$k(G')\geq 2$ by Lemma~\ref{lem:one-simp}. 
By Theorem \ref{thm:Main}, $k(L(H,f)) = 2$.}
\qed
\end{Ex}

Next, we show the following result
which gives sufficient conditions for generalized line graphs
to have competition number one.

\begin{Thm}\label{thm:Main3}
Let $G$ be the generalized line graph of
a connected vertex-weighted graph $(H,f)$ where $H$ has
at least one edge and $f$ is a nonzero function.
Then, $k(G)=1$ if one of the following holds:
\begin{itemize}
\item[{\rm (i)}]
$f(u)=f(v)=1$ for some edge $e=uv$ of $H$,
\item[{\rm (ii)}]
$f(v) \leq 1$ for any vertex $v$ of $H$.
\end{itemize}
\end{Thm}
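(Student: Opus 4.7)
The plan is to show $k(G) \leq 1$ by explicit construction and to combine this with the trivial lower bound $k(G) \geq 1$, which holds because, under the hypotheses, $G$ has no isolated vertex. Combined with Theorem~\ref{thm:Main}, this reduces the claim to exhibiting, under each of conditions (i) and (ii), an acyclic digraph $D$ with $C(D) = G \cup I_1$.

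The construction will modify the one behind Theorem~\ref{lem:main}, which produces an acyclic digraph $D_0$ with $C(D_0) = G \cup I_2$ in which the two added isolated vertices $z_1, z_2$ host the cliques $K_H(u)$ and $K_H(v)$ for the chosen edge $e = uv$. The key observation is that whenever $f(v) = 1$, the cocktail party graph $Q_v = CP(1)$ provides two non-adjacent vertices, call them $x_v$ and $y_v$, each adjacent in $G$ to the entire clique $K_H(v)$; such a vertex is simplicial in $G$ and can be used in the role of $z_1$ or $z_2$, with the understanding that after the identification it must be given out-arcs to recover its required adjacencies to $K_H(v)$ in $C(D)$. This maneuver absorbs one of the two extra isolated vertices into $V(G)$.

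For case~(i), I would choose the edge $e = uv$ in Theorem~\ref{lem:main} to be the one with $f(u) = f(v) = 1$, apply the theorem to the modified weighting $f'$ that sets $f'(u) = f'(v) = 0$, and identify $z_1$ with $x_u \in Q_u$ and $z_2$ with $x_v \in Q_v$. Then I would extend the digraph by adjoining the remaining cocktail vertices $y_u, y_v$ and a single fresh vertex $z$, arranging the ordering and arcs so that these form a chain $x_u, x_v, y_u, y_v, z$ in which each of the four cocktail vertices becomes adjacent to its required $K_H(u)$ or $K_H(v)$ via the next vertex in the chain acting as a host for the appropriate clique. For case~(ii), the same idea is carried out using a single vertex $v^*$ of $H$ with $f(v^*) = 1$ in place of both $u$ and $v$, together with any neighbor of $v^*$ in $H$. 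The hardest part is that, a priori, the single fresh vertex $z$ cannot simultaneously host both residual cliques $K_H(u) \cup \{y_u\}$ and $K_H(v) \cup \{y_v\}$, since $K_H(u) \cup K_H(v)$ is generally not a clique of $G$; this is resolved by arranging the inductive construction underlying Theorem~\ref{lem:main} so that one of the residual cliques is covered by extending the in-neighborhood of a carefully chosen vertex of $L(H)$ already present in $D_0$, freeing $z$ to host the other.
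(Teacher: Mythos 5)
Your overall strategy is the paper's: establish $k(G)\ge 1$ trivially, then build an acyclic digraph $D$ with $C(D)=G\cup I_1$ by taking the digraph of Theorem~\ref{lem:main} for a reduced weight function, identifying its two prescribed vertices (with in-neighborhoods $K_H(u)$ and $K_H(v)$) with vertices of the weight-one cocktail pairs $CP(1)$, and chaining the remaining cocktail vertices so that each one's required clique is hosted by the next vertex in the chain. But your specific chain in case (i) has a genuine flaw, and your proposed repair does not work. If you identify $z_1$ with $x_u\in Q_u$ and $z_2$ with $x_v\in Q_v$ and then let each vertex of the chain $x_u,x_v,y_u,y_v,z$ host the clique needed by its predecessor, some vertex must host a $K_H(u)$-type and a $K_H(v)$-type clique simultaneously (in your chain this already happens at $x_v$, whose in-neighborhood becomes $K_H(v)\cup K_H(u)\cup\{x_u\}$); since $K_H(u)\cup K_H(v)$ is not a clique in general, this creates spurious edges in $C(D)$. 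You notice a version of this conflict, but your fix --- covering one residual clique by enlarging the in-neighborhood of a vertex of $L(H)$ already present in $D_0$ --- fails as stated: that residual clique contains a cocktail vertex, which must come after every vertex of $L(H)$ in the acyclic ordering, so the required arc would point backwards and destroy acyclicity (or force a restructuring of the induction that you have not supplied); moreover, enlarging the in-neighborhood of a vertex of $L(H)$ would in general create spurious edges with its existing in-neighbors.

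The paper's resolution is a cross-identification you should adopt: identify the vertex hosting $K_H(u)$ with a vertex $q_v$ of $Q_v$ (not of $Q_u$), keep the new vertex $z$ as the host of $K_H(v)$, and use the chain $q_u,q'_u,q_v,q'_v,z$ with $N_D^-(q'_u)=K_H(u)\cup\{q_u\}$, $N_D^-(q_v)=K_H(u)\cup\{q'_u\}$, $N_D^-(q'_v)=K_H(v)\cup\{q_v\}$, $N_D^-(z)=K_H(v)\cup\{q'_v\}$. Every host then serves exactly one of $K_H(u)$, $K_H(v)$ together with a single cocktail vertex already adjacent to that clique, so every in-neighborhood is a clique and no spurious edges arise. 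Your case (ii) is also underspecified: when several vertices $u_1,\dots,u_t$ have weight one, all $t$ cocktail pairs must be threaded into a single chain terminating at the one new vertex $z$ (the paper uses the ordering $q_{2t},q_{2t-1},\dots,q_1,z$ with each $q_j$ hosting the clique needed by an adjacent pair member); treating only one vertex $v^*$ and leaving the remaining pairs to the mechanism of Proposition~\ref{prop:GHK} yields only $k(G)\le 2$, not $k(G)\le 1$.
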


\begin{proof}
Note that $G$ is a connected graph with at least two vertices,
since $H$ is a connected graph with at least one edge and $f$
is a nonzero function.
Therefore $k(G)\ge 1$. It is sufficient to show that
there exists an acyclic digraph $D$ such that $C(D)=G\cup I_1$.

Suppose that (i) holds. Then,
$f(u)=f(v)=1$ for some edge $uv$ of $H$
and therefore both $Q_u$ and $Q_v$ are
the edgeless graph $I_2$ on two vertices.
Let $V(Q_{u})=\{ q_{u}, q'_{u} \}$
and $V(Q_{v})=\{ q_{v}, q'_{v}\}$.
Let $f_0:V(H) \to \mathbb{Z}_{\geq 0}$
be the function defined by $f_0(u)=f_0(v)=0$
and $f_0(x)=f(x)$ for any $x\in V(H)\setminus\{u,v\}$.
Let $G_0$ be the generalized line graph of $(H,f_0)$.
Then $G_0$ is isomorphic to the graph obtained from $G$
by deleting the four vertices $q_u$, $q'_u$, $q_v$, and $q'_v$.
By Theorem~\ref{lem:main},
$k(G_0)\le 2$ and there exists an acyclic digraph $D_0$
such that $C(D_0)=G_0\cup \{ q_v, z\}$,
where $z$ is a new vertex,
and $K_H(u) = N_{D_0}^{-}(q_v)$
and $K_H(v) = N_{D_0}^{-}(z)$.
Let $D$ be a digraph defined by $V(D) := V(G) \cup \{ z \}$ and
\begin{eqnarray*}
A(D) &:=& A(D_0)
\cup \{ (q_u, q'_u), (q'_u, q_v), (q_v, q'_v), (q'_v, z) \} \\
&&
\cup \{ (x, q'_u) \mid x \in K_H(u) \}
\cup \{ (x, q'_v) \mid x \in K_H(v) \}.
\end{eqnarray*}
Then $D$ is acyclic
and $C(D) = G \cup \{ z \}$.
Therefore $k(G) \leq 1$.

Next, suppose that (ii) holds.
Then $f(v) \leq 1$ for any vertex $v$ of $H$.
Let $S := \{ v \in V(H) \mid f(v) =1 \}$.
Since $f$ is not a zero function,
the set
$S$ is not empty.
Let $S:=\{ u_1,u_2,\ldots,u_t\}$, where $t:=|S|>0$.
Then $Q_{u_i}=CP(1)=I_2$ and
let $V(Q_{u_i})=\{ q_{2i-1}, q_{2i}\}$
for $i \in [t]$.
By Theorem~\ref{lem:main},
there exists an acyclic digraph $D_0$
such that $C(D_0) = L(H) \cup \{ z, q_{2t}\}$
where $z$ is a new vertex and $N_{D_0}^{-}(z) = K_H(u_1)$.
Let $v_1,v_2,\ldots,v_n,q_{2t},z$ be an acyclic ordering of $D_0$.
If $t=1$ then let $R := \emptyset$, and
if $t>1$ then let
\[
R := \bigcup_{i=1}^{t-1} \{ (x, q_{2i}) \mid
x \in K_H(u_{i+1}) \cup \{ q_{2i+1} \} \}.
\]
Let $D$ be the digraph defined by
$V(D) := V(D_0) \cup \{q_1, \ldots, q_{2t-1} \}
= V(G) \cup \{ z \}$ and
\[
A(D) 
:= 
A(D_0) \cup \{ (q_1,z)\} 
\cup \left( \bigcup_{i=1}^{t}
\{ (x, q_{2i-1}) \mid x \in K_H(u_i) \cup \{ q_{2i} \} \} \right) \cup R.
\]
Then $D$ is acyclic since the ordering
$v_1$, $v_2$, \ldots, $v_n$, $q_{2t}$, $q_{2t-1}$, $q_{2}$, $q_1$, $z$
of the vertices of $D$ is an acyclic ordering of $D$.
In addition, 
it follows from the definitions of $D$ and $G$ that 
$C(D)=G \cup \{z\}$.
Therefore $k(G) \leq 1$.

Thus $k(G)=1$, and hence the theorem holds. 
\end{proof}

\begin{Rem}
{\rm
Each of the conditions (i) and (ii) in Theorem~\ref{thm:Main3}
is not necessary conditions for generalized line graphs
to have competition number one.}
\qed
\end{Rem}

\begin{figure}
  \begin{center}
  \psfrag{a}{$v_1$}
  \psfrag{b}{$v_2$}
  \psfrag{c}{$v_3$}
  \psfrag{d}{$v_4$}
  \psfrag{H}{$H$}
  \psfrag{Q}{$Q_{v_4}$}
  \psfrag{G}{$L(H,f)$}
  \includegraphics{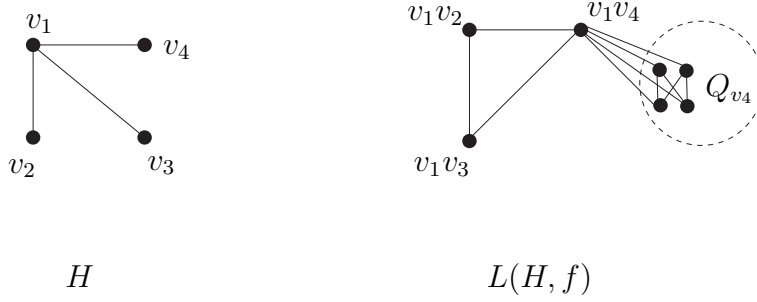}\\
  \caption{A vertex-weighted graph $(H,f)$,
  where $f:V(H) \to \mathbb{Z}_{\geq 0}$ is defined by
  $f(v_1)=f(v_2) =f(v_3) =0$ and $f(v_4)=2$,
  and its generalized line graph $L(H,f)$}
  \label{fig:ex2_8}
  \end{center}
\end{figure}

\begin{Ex}
{\rm
Let $(H,f)$ be the vertex-weighted graph
where $H$ is the graph defined by
$V(H)=\{ v_1, v_2, v_3, v_4\}$
and $E(H)=\{ v_1v_2, v_1v_3, v_1v_4 \}$
and $f:V(H) \to \mathbb{Z}_{\geq 0}$ is the function
defined by $f(v_1)=f(v_2)=f(v_3)=0$ and  $f(v_4)=2$
(see Figure~\ref{fig:ex2_8}).
Then, the generalized line graph of $(H,f)$
has the competition number one
but $(H,f)$ satisfies
neither (i) nor (ii) of Theorem~\ref{thm:Main3}.

Let $V(Q_{v_4})=\{q_1,q_2,q_3,q_4\}$
and $E(Q_{v_4})=\{q_1q_2,q_2q_3,q_3q_4,q_4q_1\}$.
To see $k(L(H,f))=1$, we define
a digraph $D$ by
\begin{eqnarray*}
V(D) &:=& V(L(H,f)) \cup \{z\} = E(H) \cup V(Q_{v_4}) \cup \{z\},  \\
A(D) &:=& 
\{(q_1,q_2),(q_4,q_2),(v_1v_2,q_2)\} 
\cup \{(q_1,q_3),(q_2,q_3),(v_1v_2,q_3) \} \\
&& \cup \{(q_2,v_1v_3),(q_3,v_1v_3),(v_1v_2,v_1v_3) \} \\
&& \cup \{(q_3,v_1v_4),(q_4,v_1v_4),(v_1v_2,v_1v_4) \} \\
&& \cup \{(v_1v_2,z), (v_1v_3,z),(v_1v_4,z) \} 
\end{eqnarray*}
where $z$ is a new vertex.
Then we can easily check that $C(D) = L(H,f) \cup \{z\}$,
and that $D$ is acyclic 
since the ordering 
$q_1, q_4, v_1v_2, q_2, q_3, v_1v_3, v_1v_4, z$ 
is an acyclic ordering of $D$. 
Therefore $k(L(H,f)) \leq 1$,
Since $L(H,f)$ is connected,
$k(L(H,f)) \geq 1$.
Hence  $k(L(H,f)) = 1$.
}
\qed
\end{Ex}

\section{Concluding Remark}\label{sec:3}

In this note, we showed that the competition number of
a generalized line graph is at most two,
which is an extension of a result on the competition number of a line graph.
In addition, we tried to characterize
generalized line graphs whose competition numbers are one,
and then found necessary conditions and sufficient conditions
for the competition number of a generalized line graph being one.
However, these conditions are not necessary-and-sufficient conditions,
so it still remains open to give a complete characterization of
generalized line graphs whose competition numbers are one.



\end{document}